\documentclass{amsart}  
\usepackage{amssymb,amsthm,amsmath} 
\theoremstyle{plain}
\newtheorem{theorem}{Theorem}[section]

\newtheorem{lemma}[theorem]{Lemma}

\theoremstyle{definition}

\newtheorem{remark}[theorem]{Remark}

\begin{document} 
\title{Graded PI-exponents of  simple \\ Lie superalgebras}
\author[D.D. Repov\v{s}]{Du\v{s}an D. Repov\v{s}}
\address[D.D. Repov\v{s}]{Faculty of Education, and Faculty of Mathematics and Physics,
University of Ljublja\-na
\&
Institute of Mathematics, Physics and Mechanics, 1000 Ljubljana, Slovenia}\email{dusan.repovs@guest.arnes.si}

\author[M.V. Zaicev]{Mikhail V. Zaicev}
\address[M.V. Zaicev]{ 
Department of Algebra,
Faculty of Mathematics and
Mechanics,  Moscow State University,
119992 Moscow, Russia}
\email{zaicevmv@mail.ru}

\subjclass[2010]{16R10, 16P90, 17B01}
\keywords{Numerical PI-theory, graded PI-exponent, graded identity, graded codimension, simple Lie superalgebra}

\thanks{
This research was supported by the Slovenian Research Agency
grants P1-0292, J1-5435, and J1-6721, and the Russian Foundation for Basic Research grant 13-01-00234a. }

\maketitle

\begin{abstract}
We study $\mathbb{Z}_2$-graded identities of simple Lie superalgebras over a field 
of characteristic zero. We prove the existence of the graded PI-exponent for such algebras.
\end{abstract}

\section{Introduction}\label{intro}

Let $A$ be an algebra over a field $F$ with $\rm{char}~F=0$. A natural way of measuring
the polynomial identities satisfied by $A$ is by studying the asymptotic 
behaviour of its sequence of codimensions $\{c_n(A)\}, n=1,2,\ldots~~$. If $A$ is a
finite dimensional algebra then the sequence $\{c_n(A)\}$ is exponentially bounded.
In this case it is natural to ask the question about existence of the limit
\begin{equation}\label{e1}
\lim_{n\to\infty} \sqrt[n]{c_n(A)}
\end{equation}
called the PI-exponent of $A$. 
Such question was first asked for associative algebras by
 Amitsur at the end of 1980's. 
 A positive answer was given in \cite{GZ1}. Subsequently 
 it was shown that the same problem has a positive solution for finite dimensional
Lie algebras \cite{Z}, for finite dimensional alternative and Jordan algebras \cite{GSZ} and for some other classes. Recently  it was shown that in general  the limit (\ref{e1}) does not exist even if $\{c_n(A)\}$ is exponentially bounded \cite{Z-ERA}. 
The counterexample constructed 
in \cite{Z-ERA} is infinite dimensional whereas for finite dimensional
algebras the problem of the existence of the PI-exponent is still open. Nevertheless, if
$\dim A <\infty$ and $A$ is simple then the PI-exponent of $A$ exists as it was proved in 
\cite{GZ2}.

If in addition,  $A$ has a group grading then graded identities, graded codimensions and graded
PI-exponents can also be  considered. In this paper we discuss graded codimensions behaviour 
for finite dimensional simple Lie superalgebras. Graded codimensions  of finite dimensional
Lie superalgebras were studied in a number of papers (see for example, \cite{RZ1} and 
\cite{RZ2}). In particular, in \cite{RZ1} an upper bound of graded codimension growth was found
for one of the series of simple Lie superalgebras.

In the present paper we prove that the graded PI-exponent of any finite dimensional simple Lie
superalgebra always exists. All details concerning numerical PI-theory can be found in 
\cite{GZbook}.

\section{Main constructions and definitions}\label{sec:2}

Let $L=L_0\oplus L_1$ be a Lie superalgebra. Elements from the component $L_0$ are called
{\it even} and elements from $L_1$ are called {\it odd}. Denote by $\mathcal{L}(X,Y)$ a free Lie superalgebra with infinite sets of even generators $X$ and odd generators $Y$. A polynomial $f=f(x_1,\ldots, x_m,y_1,\ldots,y_n) \in \mathcal{L}(X,Y)$ is said to be a {\it graded identity} of Lie superalgebra $L=L_0\oplus L_1$ if $f(a_1,\ldots, a_m,b_1,\ldots,b_n)=0$ whenever $a_1,\ldots, a_m\in L_0, b_1,\ldots,b_n\in L_1$.
 
Denote by
$Id^{gr}(L)$ the set of all graded identities of $L$. Then $Id^{gr}(L)$ is  an ideal of 
$\mathcal{L}(X,Y)$.  Given non-negative integers $0\le k \le n$, let
$P_{k,n-k}$ be the subspace of all multilinear polynomials 
$f=f(x_1,\ldots, x_k,y_1,\ldots,y_{n-k})\in \mathcal{L}(X,Y)$ of degree $k$ on even 
variables and of degree $n-k$ on odd variables. Then $P_{k,n-k}\cap Id^{gr}(L)$ is the
subspace of all multilinear graded identities of $L$ of total degree $n$ depending on $k$ 
even variables and $n-k$ odd variables. Denote also by $P_{k,n-k}(L)$ the quotient
$$
P_{k,n-k}(L)=\frac{P_{k,n-k}}{P_{k,n-k}\cap Id^{gr}(L)}.
$$
Then the {\it partial} graded $(k,n-k)$-codimension of $L$ is
$$
c_{k,n-k}(L)=\dim P_{k,n-k}(L)
$$
and the {\it total} graded $n$th codimension of $L$ is
\begin{equation}\label{e3a}
c_n^{gr}(L)=\sum_{k=0}^n{n\choose k} c_{k,n-k}(L).
\end{equation}

If the sequence $\{c_n^{gr}(L)\}_{n\ge 1}$ is  exponentially bounded then one can
consider the related bounded sequence $\sqrt[n]{c_n^{gr}(L)}$. The latter sequence
has the following lower and upper limits 
$$
\underline{exp}^{gr}(L)=\liminf_{n\to\infty} \sqrt[n]{c_n^{gr}(L)},\qquad
\overline{exp}^{gr}(L)=\limsup_{n\to\infty} \sqrt[n]{c_n^{gr}(L)}
$$
called the {\it lower}  and {\it upper} PI-exponents of $L$, respectively. If the ordinary
limit exists, it is called the (ordinary) {\it graded} PI-{\it exponent} of $L$,
$$
exp^{gr}(L)=\lim_{n\to\infty} \sqrt[n]{c_n^{gr}(L)}.
$$

Symmetric groups and their representations play an important role in the theory of 
codimensions. In particular, in the case of graded identities one can consider the 
$S_k\times S_{n-k}$-action on multilinear graded polynomials. Namely, the subspace
$P_{k,n-k}\subseteq\mathcal{L}(X,Y)$ has a natural structure of 
$S_k\times S_{n-k}$-module where $S_k$ acts on even variables $x_1,\ldots,x_k$ 
while $S_{n-k}$ acts
on odd variables $y_1,\ldots,y_{n-k}$. Clearly, $P_{k,n-k}\cap Id^{gr}(L)$ is the
submodule under this action and we get an induced $S_k\times S_{n-k}$-action on 
$P_{k,n-k}(L)$. The character $\chi_{k,n-k}(L)=\chi(P_{k,n-k}(L))$ is called 
$(k,n-k)$ {\it cocharacter} of $L$. Since $\rm{char}~F=0$, this character can be decomposed 
into the sum of irreducible characters
\begin{equation}\label{e2}
\chi_{k,n-k}(L)=\sum_{{\lambda\vdash k\atop \mu\vdash n-k}}
m_{\lambda,\mu}\chi_{\lambda,\mu}
\end{equation}
where $\lambda$ and $\mu$ are partitions of $k$ and $n-k$, respectively. All details
concerning representations of symmetric groups  can be found in \cite{JK}. An 
application of $S_n$-representations in PI-theory can be found in 
\cite{Baht,
Dr,
GZbook}.
 
Recall that an irreducible $S_k\times S_{n-k}$-module with the character 
$\chi_{\lambda,\mu}$ is the tensor product of $S_k$-module with the character
$\chi_\lambda$ and $S_{n-k}$-module with the character $\chi_\mu$. In particular,
the dimension $\deg\chi_{\lambda,\mu}$ of this module is the product 
$d_\lambda d_\mu$ where $d_\lambda=\deg\chi_\lambda, d_\mu=\deg\chi_\mu$.
Taking into account multiplicities $m_{\lambda,\mu}$ in (\ref{e2}) we get the relation
\begin{equation}\label{e3}
c_{k,n-k}(L)=\sum_{{\lambda\vdash k\atop \mu\vdash n-k}}
m_{\lambda,\mu}d_\lambda d_\mu.
\end{equation}
A number of irreducible components in the decomposition of $\chi_{k,n-k}(L)$, i.e. 
the sum
$$
l_{k,n-k}(L)=\sum_{{\lambda\vdash k\atop \mu\vdash n-k}} m_{\lambda,\mu}
$$ 
is called the $(k,n-k)$-{\it colength} of $L$. The {\it total} graded colength $l_n^{gr}(L)$ is
$$
l_n^{gr}(L)=\sum_{k=0}^n l_{k,n-k}(L).
$$

Now let $L$ be a finite dimensional Lie superalgebra, $\dim L=d$. Then
\begin{equation}\label{e4}
c_n^{gr}(L)\le d^n
\end{equation}
by the results of \cite{BD} (see also \cite{GR}). On the other hand, there exists a polynomial $\varphi$ such that
\begin{equation}\label{e5}
\l_n^{gr} \le \varphi(n)
\end{equation}
for all $n=1,2,\ldots$ as it was mentioned in \cite{RZ1}. Note also that $m_{\lambda,\mu}\ne 0$
in (\ref{e2}) only if $\lambda\vdash k, \mu\vdash n-k$ are partitions with at most $d$
components, that is $\lambda=(\lambda_1,\ldots,\lambda_p),\mu=(\mu_1,\ldots,\mu_q)$ and
$p,q\le d=\dim L$.

Since all partitions under our consideration are of the height at most $d$, we will use the
following agreement. If say, $\lambda$ is a partition of $k$ with $p<d$ components then
we will write $\lambda=(\lambda_1,\ldots,\lambda_d)$ anyway, assuming that
$\lambda_{p+1}=\cdots=\lambda_d=0$.

For studying asymptotic behaviour of codimensions it is convenient to use the following
function defined on partitions. Let $\nu$ be a partition of $m$, $\nu=(\nu_1,\ldots,\nu_d)$.
We introduce the following function of $\nu$:
$$
\Phi(\nu)=\frac{1}{\left(\frac{\nu_1}{m}\right)^\frac{\nu_1}{m}\cdots 
\left(\frac{\nu_{d}}{m}\right)^\frac{\nu_{d}}{m}}.
$$
The values $\Phi(\nu)^m$ and $d_\nu=\deg\chi_\nu$ are very close in the following sense.
\begin{lemma}\label{l1}\rm{(see} \cite[Lemma 1]{GZ2})
Let $m\ge 100$. Then
$$
\frac{\Phi(\nu)^m}{m^{d^2+d}} \le d_\nu \le m \Phi(\nu)^m.
$$
\end{lemma}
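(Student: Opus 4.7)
My plan is to combine two classical tools: the hook-length (Young) formula
$$d_\nu = \frac{m!\prod_{1\le i<j\le d}(h_i-h_j)}{\prod_{i=1}^d h_i!}, \quad h_i=\nu_i+d-i,$$
together with Stirling's approximation and the entropy bound for multinomial coefficients.

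For the upper bound I would observe that each standard Young tableau of shape $\nu$ determines a partition of $\{1,\ldots,m\}$ into row-subsets of sizes $\nu_1,\ldots,\nu_d$, whence $d_\nu\le\binom{m}{\nu_1,\ldots,\nu_d}$. Interpreting the multinomial coefficient probabilistically under the distribution with parameters $p_i=\nu_i/m$ gives $\binom{m}{\nu_1,\ldots,\nu_d}\prod(\nu_i/m)^{\nu_i}\le 1$, i.e.\ $\binom{m}{\nu_1,\ldots,\nu_d}\le\prod(m/\nu_i)^{\nu_i}=\Phi(\nu)^m$. Hence $d_\nu\le\Phi(\nu)^m\le m\Phi(\nu)^m$ immediately, and the extra factor of $m$ provides room for the Stirling error and any constants.

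For the lower bound I would rewrite the Young formula as $d_\nu=\binom{m}{\nu_1,\ldots,\nu_d}\cdot R$ where
$$R=\prod_{1\le i<j\le d}(h_i-h_j)\cdot\prod_{i=1}^d\frac{\nu_i!}{h_i!}.$$
Since $\nu_i\ge \nu_{i+1}$, we have $h_i-h_j\ge j-i$, so $\prod(h_i-h_j)\ge\prod_{k=1}^{d-1}k!$ is a $d$-dependent positive constant. Moreover $h_i!/\nu_i!=\prod_{k=1}^{d-i}(\nu_i+k)\le m^{d-i}$, so $\prod h_i!/\prod\nu_i!\le m^{d(d-1)/2}$, giving $R\ge c_d\,m^{-d(d-1)/2}$. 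Applying Stirling $n!=\sqrt{2\pi n}(n/e)^n(1+O(1/n))$ to $m!$ and to each nonzero $\nu_i!$ (using $0^0=1$ in $\Phi$ to absorb the empty parts), and invoking the identity $m^m/\prod\nu_i^{\nu_i}=\Phi(\nu)^m$, yields $\binom{m}{\nu_1,\ldots,\nu_d}\ge c'_d\,\Phi(\nu)^m\,m^{-(d-1)/2}$. Combining the two estimates gives $d_\nu\ge c''_d\,\Phi(\nu)^m\,m^{-(d^2-1)/2}$, which sits comfortably inside $\Phi(\nu)^m/m^{d^2+d}$ once $m\ge 100$ absorbs the $d$-dependent constants and Stirling's $O(1/m)$ term.

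The main obstacle is purely bookkeeping: tracking Stirling's correction terms, handling zero parts of $\nu$ so as never to apply Stirling to $0!$, and verifying consistency with the convention $0^0=1$ in the definition of $\Phi$. Apart from this, each estimate is elementary, and the generous slack between the tight exponent $(d^2-1)/2$ coming out of the argument and the stated exponent $d^2+d$ ensures that no delicate constant tracking is required.
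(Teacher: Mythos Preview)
The paper does not prove this lemma at all: it is quoted verbatim from \cite[Lemma~1]{GZ2} and closed with a bare $\Box$. So there is no in-paper argument to compare against. Your sketch is correct and is in fact the standard route used in that reference: bound $d_\nu$ above by the multinomial coefficient $\binom{m}{\nu_1,\ldots,\nu_d}\le\Phi(\nu)^m$ via the ``row contents'' injection, and bound it below by combining the Frobenius--Young formula with Stirling's estimate for the multinomial coefficient.

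Two small points deserve a line of care when you write it out. First, the claim $h_i!/\nu_i!\le m^{d-i}$ is not literally true, since each factor $\nu_i+k$ can be as large as $m+d-1$; replacing $m$ by $m+d$ (hence by $2m$ when $m\ge d$, and disposing of the case $d>m$ separately, where the stated lower bound is trivial because $\Phi(\nu)^m\le m^m$) repairs this at the cost of an extra factor $2^{d(d-1)/2}$. Second, the phrase ``once $m\ge 100$ absorbs the $d$-dependent constants'' is the one place where something must actually be checked: the threshold $100$ is fixed while $d$ is not, so you need the slack $m^{(d^2+d)-(d^2-1)/2}=m^{(d+1)^2/2}$ at $m=100$ to dominate $1/c''_d$ for \emph{every} $d$. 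Since $1/c''_d$ is at worst of order $2^{d(d-1)/2}(2\pi)^{(d-1)/2}e^{d/12}$ while $100^{(d+1)^2/2}$ grows with the larger base, this does hold, but it is not automatic from ``$m\ge 100$'' alone and should be made explicit.
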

\hfill $\Box$

Function $\Phi$ has also the following useful property. 
Let $\nu$ and $\rho$ be two partitions of $m$ with the corresponding Young diagrams
$D_\nu, D_\rho$. We say that $D_\rho$ is obtained from $D_\nu$ by pushing down one
box if there exist $1\le i<j\le d$ such that $\rho_i=\nu_i-1, \rho_j=\nu_j+1$
and $\rho_t=\nu_t$ for all remaining $1\le t \le d$.

\begin{lemma}\label{l2}\rm{(see} \cite[Lemma 3]{GZ2}, \cite[Lemma 2]{ZR})
Let $D_\rho$ be obtained from $D_\nu$ by pushing down one box. Then 
$\Phi(\rho) \ge \Phi(\nu)$.
\end{lemma}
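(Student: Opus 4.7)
The plan is to take logarithms and reduce the statement to a one-parameter convexity inequality. Setting $f(x)=x\ln x$ with the convention $f(0)=0$, one has
$$
\ln\Phi(\nu) = -\sum_{k=1}^{d} f\!\left(\frac{\nu_k}{m}\right),
$$
and similarly for $\rho$. Since $\rho_k=\nu_k$ for $k\neq i,j$, the desired inequality $\Phi(\rho)\ge\Phi(\nu)$ is equivalent to the two-term inequality
$$
f\!\left(\frac{\nu_i}{m}\right)+f\!\left(\frac{\nu_j}{m}\right) \ge f\!\left(\frac{\nu_i-1}{m}\right)+f\!\left(\frac{\nu_j+1}{m}\right).
$$

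To prove this, I would introduce $a=\nu_i/m$, $b=\nu_j/m$, $h=1/m$, and consider the one-parameter family $g(t)=f(a)+f(b)-f(a-t)-f(b+t)$ for $t\in[0,h]$. Since $f'(x)=\ln x+1$, a direct calculation gives $g(0)=0$ and
$$
g'(t)=f'(a-t)-f'(b+t)=\ln\frac{a-t}{b+t},
$$
so $g$ is non-decreasing precisely on the interval $t\in\bigl[0,(a-b)/2\bigr]$.

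The remaining step is to confirm that the hypothesis $h\le (a-b)/2$ holds. Because $\rho$ is again a partition and $i<j$, one must have $\rho_i\ge\rho_j$, that is, $\nu_i-1\ge\nu_j+1$, hence $\nu_i-\nu_j\ge 2$ and consequently $a-b\ge 2h$. Integrating $g'$ from $0$ to $h$ then yields $g(h)\ge 0$, which is exactly the required inequality.

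Conceptually the result is an instance of Schur-convexity: the map $(x_1,\dots,x_d)\mapsto\sum_k f(x_k)$ is Schur-convex because $f$ is convex on $[0,1]$, and pushing a box down replaces the pair $(a,b)$ by the majorized pair $(a-h,b+h)$; the Schur--Ostrowski criterion would give the same conclusion in one line. I do not foresee a serious obstacle; the only delicate point is observing that the partition constraint on $\rho$ furnishes exactly the separation $a-b\ge 2h$ needed to run the calculus argument, and handling the boundary case $\nu_j=0$ via the convention $f(0)=0$, where the inequality $\nu_i\ge 2$ is still forced by $\rho_i\ge\rho_j=1$.
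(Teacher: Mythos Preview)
Your argument is correct. The paper itself does not prove this lemma; it merely cites \cite[Lemma~3]{GZ2} and \cite[Lemma~2]{ZR} and moves on. Your self-contained approach via the convexity of $f(x)=x\ln x$ (equivalently, Schur-convexity of $\sum_k f(x_k)$) is exactly the standard way to establish the inequality and is what underlies the cited results, so there is no genuine difference in method to discuss. The only point one might spell out a touch more explicitly is the continuity of $g$ at $t=0$ in the boundary case $b=0$ (which follows from $t\ln t\to 0$), so that the monotonicity of $g$ on $(0,h]$ really yields $g(h)\ge g(0)=0$; you already flag this via the convention $f(0)=0$, and it poses no difficulty.
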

\hfill $\Box$ 

\section{Existence of graded PI-exponents}\label{sec:3}

Throughout this section let $L=L_0\oplus L_1$ be a finite dimensional simple Lie superalgebra,
$\dim L=d$. Then by (\ref{e4}) its upper graded PI-exponent exists,
$$
a=\overline{exp}^{gr}(L)=\limsup_{n\to\infty}\sqrt[n]{c_n^{gr}(L)}.
$$
Note that the even component $L_0$ of $L$ is not solvable since $L$ is simple
(see \cite[Chapter 3, \textsection 2, Proposition 2]{Sch}).

We shall need the following fact.

\begin{remark}
Let $G$ be a non-solvable finite dimensional Lie algebra over a field $F$ of characteristic 
zero. Then the ordinary PI-exponent of $G$ exists and is an integer not less than $2$.
\end{remark}

\begin{proof} It is known that $c_n(G)$ is either polynomially bounded or it grows exponentially not
slower that $2^n$ (see \cite{M}). The first option is possible only if $G$ is solvable. 
On the other hand $exp(G)$ always exists and is an integer \cite{Z} therefore we are done.
\hfill \end{proof}

By the previous remark $P_{n,0}(L) \gtrsim 2^n$ asymptotically and hence
\begin{equation}\label{e6}
a\ge 2.
\end{equation}

The following lemma is the key technical step in the proof of our main result.

\begin{lemma}\label{l3}
For any $\varepsilon>0$ and  any $\delta>0$ there exists an increasing sequence of 
positive integers $n_0,n_1,\ldots~~$such that
\begin{itemize}
\item [(i)] \ \ 
$\sqrt[n]{c_n^{gr}(L)}>(1-\delta)(a-\varepsilon)$ for all $n=n_q, \  q=1,2,\ldots$,
\item [(ii)] \ \ 
$n_{q+1}-n_q\le n_0+d$.
\end{itemize}
\end{lemma}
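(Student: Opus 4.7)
The strategy is to use the $\limsup$ hypothesis to pick an initial ``base'' index $n_0$ where the graded codimension is close to $a^{n_0}$, extract a dominant pair of Young diagrams from the cocharacter at level $n_0$, and then, for each $q\ge 1$, produce a good index $n_q$ near $qn_0$ by a scaling/concatenation construction applied to that dominant pair.

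First, using $a=\limsup\sqrt[n]{c_n^{gr}(L)}$ and the polynomial colength bound (\ref{e5}), I pick $n_0\ge 100$ large enough that
$$
c_{n_0}^{gr}(L)\ge\left(\left(1-\tfrac{\delta}{4}\right)\left(a-\tfrac{\varepsilon}{2}\right)\right)^{n_0},
$$
and so that $\varphi(n_0)$ and all polynomial-in-$n$ corrections appearing below are absorbed into a multiplicative slack $(1-\delta/4)^{-n}$. By (\ref{e3}) and (\ref{e5}), the sum $c_{n_0}^{gr}(L)=\sum m_{\lambda,\mu}d_\lambda d_\mu$ has at most $\varphi(n_0)$ nonzero summands, so some pair $(\lambda_0,\mu_0)$, $\lambda_0\vdash k_0$, $\mu_0\vdash n_0-k_0$, satisfies $m_{\lambda_0,\mu_0}\ne 0$ and $d_{\lambda_0}d_{\mu_0}\ge c_{n_0}^{gr}(L)/\varphi(n_0)$.

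For each $q\ge 1$ form the scaled partitions $\lambda^{(q)}:=(q\lambda_{0,1},\ldots,q\lambda_{0,d})$ and $\mu^{(q)}:=(q\mu_{0,1},\ldots,q\mu_{0,d})$, of sizes $qk_0$ and $q(n_0-k_0)$. Because $\Phi$ is manifestly scale-invariant, $\Phi(\lambda^{(q)})=\Phi(\lambda_0)$ and $\Phi(\mu^{(q)})=\Phi(\mu_0)$. The crucial technical claim — and the main obstacle — is that the $(qk_0,q(n_0-k_0))$-cocharacter of $L$ has nonzero multiplicity at $(\lambda^{(q)},\mu^{(q)})$, possibly after adjusting by adding at most $d$ extra boxes (this adjustment is the origin of the ``$+d$'' in~(ii), and Lemma~\ref{l2} ensures the adjustment does not decrease $\Phi$). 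The natural way to establish this is to take a nonzero Young-symmetric substitution of shape $(\lambda_0,\mu_0)$ into $L$, guaranteed by $m_{\lambda_0,\mu_0}\ne 0$, and to concatenate $q$ independent copies of it into a multilinear polynomial of shape $(\lambda^{(q)},\mu^{(q)})$; the non-solvability of $L_0$ from the Remark provides the separator elements of the required parity that keep the concatenated substitution nonzero.

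Granting the nonzero multiplicity, Lemma~\ref{l1} applied to $\lambda^{(q)}$ and $\mu^{(q)}$ yields
$$
c_{qn_0}^{gr}(L)\ge d_{\lambda^{(q)}}d_{\mu^{(q)}}\ge\frac{\Phi(\lambda_0)^{qk_0}\Phi(\mu_0)^{q(n_0-k_0)}}{(qn_0)^{2d^2+2d}}\ge\frac{\bigl(c_{n_0}^{gr}(L)/\varphi(n_0)\bigr)^q}{\mathrm{poly}(qn_0)}.
$$
Taking the $(qn_0)$-th root, the polynomial factors are absorbed by the largeness of $n_0$ (and then of $q$), and the right-hand side exceeds $(1-\delta)(a-\varepsilon)$. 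Setting $n_q:=qn_0+r_q$ with $0\le r_q\le d$ (the box-correction from the concatenation step) produces the required increasing sequence: (i) holds by the display above, while $n_{q+1}-n_q=n_0+(r_{q+1}-r_q)\le n_0+d$ yields (ii).
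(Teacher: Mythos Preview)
Your overall strategy---pick a dominant pair $(\lambda_0,\mu_0)$ at some large $n_0$, then concatenate $q$ copies of a corresponding non-identity using short separators and exploit the scale-invariance $\Phi(q\lambda_0)=\Phi(\lambda_0)$---is exactly the paper's approach. But there is a genuine gap in your bookkeeping.

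\medskip
\textbf{The missing binomial coefficient.} You write ``the sum $c_{n_0}^{gr}(L)=\sum m_{\lambda,\mu}d_\lambda d_\mu$'', but equation~(\ref{e3}) gives only $c_{k,n-k}(L)=\sum m_{\lambda,\mu}d_\lambda d_\mu$; the \emph{total} codimension is $c_{n_0}^{gr}(L)=\sum_k\binom{n_0}{k}c_{k,n_0-k}(L)$ by~(\ref{e3a}). Hence a dominant pair satisfies only
\[
\binom{n_0}{k_0}d_{\lambda_0}d_{\mu_0}\ \gtrsim\ c_{n_0}^{gr}(L),
\]
and $d_{\lambda_0}d_{\mu_0}$ alone can be exponentially smaller than $c_{n_0}^{gr}(L)$ (by a factor $\binom{n_0}{k_0}$). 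Your final chain
\[
c_{qn_0}^{gr}(L)\ \ge\ d_{\lambda^{(q)}}d_{\mu^{(q)}}\ \gtrsim\ \bigl(c_{n_0}^{gr}(L)\bigr)^{q}
\]
therefore fails: the first inequality throws away $\binom{qn_0}{qk_0}$, while the last inequality needs exactly the factor $\binom{n_0}{k_0}^{q}$ that was never there. The fix is to keep the binomial on both sides and use (as the paper does) that $\binom{qn_0}{qk_0}\approx\Phi\!\left(\tfrac{k_0}{n_0},\tfrac{n_0-k_0}{n_0}\right)^{qn_0}$, so that the \emph{product} $\binom{n_0}{k_0}d_{\lambda_0}d_{\mu_0}$ scales correctly to the $q$-th power.

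\medskip
\textbf{Two smaller points.} First, the separator elements do not come from ``non-solvability of $L_0$''; that fact is used only to ensure $a\ge 2$. The concatenation step uses \emph{simplicity} of $L$: if $A\ne 0$ then the ideal generated by $A$ is all of $L$, so one finds $c_1,\ldots,c_{d_1}$ with $d_1\le d-1$ and $[A,c_1,\ldots,c_{d_1},A]\ne 0$. Second, the Littlewood--Richardson step does not in general give nonzero multiplicity at exactly $(q\lambda_0,q\mu_0)$: the irreducible summands of the module generated by the concatenated polynomial are indexed by $(\nu,\rho)$ with $\nu$ obtained from $q\lambda_0$ by \emph{pushing down} boxes (and likewise $\rho$ from $q\mu_0$), not by adding boxes. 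Lemma~\ref{l2} then gives $\Phi(\nu)\ge\Phi(\lambda_0)$, which is what you need. The ``$+d$'' in~(ii) comes from the separator variables added at each step, not from any adjustment of the Young diagrams.
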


\begin{proof} Fix $\varepsilon, \delta>0$.
Since $a$ is an upper limit  there exist infinitely many indices $n_0$ such that
$$
c_{n_0}^{gr}(L)> (a-\varepsilon)^{n_0}.
$$
Fixing one of $n_0$ we can find an integer $0\le k_0\le n_0$ such that
\begin{equation}\label{e7}
{n_0\choose k_0}c_{k_0,n_0-k_0}(L)>\frac{1}{n_0+1}(a-\varepsilon)^{n_0}
>\frac{1}{2n_0}(a-\varepsilon)^{n_0}
\end{equation}
(see (\ref{e3a})). Relation (\ref{e5}) shows that
$$
\sum_{{\lambda\vdash k\atop \mu\vdash n-k}}
m_{\lambda,\mu} \le \varphi(n)
$$
for any $0\le k\le n$
where $m_{\lambda,\mu}$ are taken from (\ref{e2}). Then (\ref{e3}) implies
the existence of partitions $\lambda\vdash k_0, \mu\vdash n_0-k_0$ such that
\begin{equation}\label{e8}
{n_0\choose k_0}d_\lambda d_\mu >\frac{1}{2n_0\varphi(n_0)}(a-\varepsilon)^{n_0}.
\end{equation}
The latter inequality means that there exists a multilinear polynomial
$$
f=f(x_1,\ldots,x_{k_0},y_1,\ldots,y_{n_0-k_0})\in P_{k_0,n_0-k_0}
$$ 
such that
$F[S_{k_0}\times S_{n_0-k_0}]f$ is an irreducible 
$F[S_{k_0}\times S_{n_0-k_0}]$-submodule $P_{k_0,n_0-k_0}$ with the character
$\chi_{\lambda,\mu}$ and $f\not\in Id^{gr}(L)$. In particular, there exist
$a_1,\ldots,a_{k_0}\in L_0,b_1,\ldots,b_{n_0-k_0}\in L_1$ such that
$$
A=f(a_1,\ldots,a_{k_0},b_1,\ldots,b_{n_0-k_0})\ne 0
$$
in $L$. First we will show how to find $n_1,k_1$ which are approximately equal 
to $2n_0,2k_0$,
respectively, satisfying the same inequality as (\ref{e7}).

Since $L$ is simple and $A\ne 0$ the ideal generated by $A$ coincides with $L$.
Clearly, every simple Lie superalgebra is centerless. Hence one can find 
$c_1,\ldots,c_{d_1}\in L_0\cup L_1$ such that
$$
[A,c_1,\ldots,c_{d_1},A]\ne 0
$$
and $d_1\le d-1$. Here we use the left-normed notation $[[a,b],c]=[a,b,c]$ for
nonassociative products. It follows that a polynomial
$$
[f_1,z_1,\ldots,z_{d_1},f_2]=g_2\in P_{2k_0+p,2n_0-2k_0+r},\quad p+r=d_1,
$$
is also a non-identity of $L$ where $z_1,\ldots,z_{d_1}\in X\cup Y$ are even or odd
variables, whereas $f_1$ and $f_2$ are copies of $f$ written on disjoint sets of
indeterminates,
$$
f_1=f(x_1^1,\ldots,x_{k_0}^1,y_1^1,\ldots,y_{n_0-k_0}^1),
$$
$$
f_2=f(x_1^2,\ldots,x_{k_0}^2,y_1^2,\ldots,y_{n_0-k_0}^2).
$$

Consider the $S_{2k_0}\times S_{2n_0-2k_0}$-action on $P_{2k_0+p,2n_0-2k_0+r}$
where $S_{2k_0}$ acts on $x_1^1,\ldots,x^1_{k_0}, x_1^2,\ldots,x^2_{k_0}$ 
and $S_{2n_0-2k_0}$ acts on  $y_1^1,\ldots,y^1_{n_0-k_0}, y_1^2,\ldots,y^2_{n_0-k_0}$. 
Denote by $M$ the $F[S_{2k_0}\times S_{2n_0-2k_0}]$-submodule generated by $g_2$ 
and examine its character. It follows from Richardson-Littlewood rule  that
$$
\chi(M)=\sum_{{\nu\vdash 2k_0\atop \rho\vdash 2n_0-2k_0}}
t_{\nu,\rho}\chi_{\nu,\rho}
$$
where either $\nu=2\lambda=(2\lambda_1,\ldots,2\lambda_d)$ or $\nu$ is obtained from $2\lambda$
by pushing down one or more boxes of $D_{2\lambda}$. Similarly, $\rho$ is either equal to $2\mu$  
or $\rho$ is obtained from $2\mu$ by pushing down one or more boxes of $D_{2\mu}$. Then by 
Lemma \ref{l2} we have
$$
\Phi(\nu)\ge \Phi(2\lambda)=\Phi(\lambda),\quad \Phi(\rho)\ge \Phi(2\mu)=\Phi(\mu).
$$
By Lemma \ref{l1} and (\ref{e8}) we have
\begin{equation}\label{e9}
{n_0\choose k_0} 
(\Phi(\lambda)\Phi(\mu))^{n_0} >\frac{1}{2n_0^3\varphi(n_0)}(a-\varepsilon)^{n_0}.
\end{equation}

Now we present the lower bound for binomial coefficients in terms of function $\Phi$.
Clearly, the pair $(k,n-k)$ is a two-component partition of $n$ if $k\ge n-k$.
Otherwise $(n-k,k)$ is a partition of $n$. Since $x^{-x}y^{-y}=y^{-y}x^{-x}$ for all
$x,y\ge 0, x+y=1,$ we will use the notation $\Phi(\frac{k}{n},\frac{n-k}{n})$ in
both cases $k\ge n-k$ or $n-k\ge k$. Then it easily follows from the Stirling formula  that
$$
\frac{1}{n}\Phi\left(\frac{k}{n},\frac{n-k}{n}\right)^n\le {n\choose k}
\le n\Phi\left(\frac{k}{n},\frac{n-k}{n}\right)^n,
$$
hence
\begin{equation}\label{e10}
{qk_0\choose qn_0)} > 
\frac{1}{qn_0}\Phi\left(\frac{qk_0}{qn_0},\frac{qn_0-qk_0}{qn_0}\right)^{qn_0}
=\frac{1}{qn_0}\Phi\left(\frac{k_0}{n_0},\frac{n_0-k_0}{n_0}\right)^{qn_0}
\end{equation}
for all integers $q\ge 2$ and also
\begin{equation}\label{e11}
\left(\Phi\left(\frac{k_0}{n_0},\frac{n_0-k_0}{n_0}\right)
\Phi(\lambda)\Phi(\mu)\right)^{n_0}
>\frac{1}{2n_0^4\varphi(n_0)}(a-\varepsilon)^{n_0}.
\end{equation}
by virtue of  (\ref{e9}).

Recall that we have constructed earlier a multilinear polynomial 
$g_2=[f_1,z_1,\ldots, z_{d_1},f_2]$ which is not a graded identity of $L$ and $f_1,f_2$ 
are copies of $f$. Applying the same procedure we can construct a non-identity of the type
$$
g_q=[g_{q-1},w_1,\ldots, w_{d_{q-1}},f_q]
$$
of total degree $n_{q-1}=n_{q-2}+n_0+w_1+\cdots+w_{d_{q-1}}$ where $d_{q-1}\le d$
and $f_q$ is again a copy of $f$ for all $q\ge 2$.

As in the case $q=2$ the $F[S_{qk_0}\times S_{qn_0-qk_0}]$-submodule of $P_{k,n-k}(L)$
(where $n=n_{q-1}=qn_0+p', k=k_{q-1}=qk_0+p''$) contains an irreducible summand with 
the character $\chi_{\nu,\rho}$ where $\nu\vdash qk_0,\rho\vdash qn_0-qk_0$,
$\Phi(\nu)\ge \Phi(\lambda), \Phi(\rho)\ge \Phi(\mu)$. Moreover, for $n=n_{q-1}$ we have
$$
c_n^{gr}(L)\ge {qn_0\choose qk_0}d_\nu d_\rho >\frac{1}{n^{2d^2+2d}}
{qn_0\choose qk_0}
\left(\Phi(\lambda)\Phi(\mu)\right)^{qn_0}
$$
$$
>\frac{1}{n^{2d^2+2d+1}}
\left(\Phi\left(\frac{k_0}{n_0},\frac{n_0-k_0}{n_0}\right)\Phi(\lambda)\Phi(\mu)\right)^{qn_0}
$$
by Lemma \ref{l1} and the inequality (\ref{e10}). Now it follows from (\ref{e11}) that
$$
c_n^{gr}(L)>\frac{1}{n^{2d^2+2d+1}}
\frac{1}{(2n_0^4\varphi(n_0))^q}(a-\varepsilon)^{qn_0}.
$$
Note that $qn_0\le n \le qn_0+qd$. Hence $q/n\le 1/n_0$ and 
$$
(a-\varepsilon)^{qn_0} \ge\frac{(a-\varepsilon)^{n}}{a^{qd}}
$$
since $a\ge 2$ (see (\ref{e6})). Therefore
$$
\sqrt[n]{c_n^{gr}(L)}>
\frac{(a-\varepsilon)^{n}}{n^\frac{{2d^2+2d+1}}{n} (2a^dn_0^4\varphi(n_0))^\frac{1}{n_0}}
$$
for all $n=n_{q-1}, q=1,2,\ldots \quad$. Finally note that the initial $n_0$ can be taken to 
be arbitrarily large. Hence we can suppose that
$$
n^{-\frac{{2d^2+2d+1}}{n}} (2a^dn_0^4\varphi(n_0))^{-\frac{1}{n_0}}>1-\delta
$$
for all $n\ge n_0$. Hence the inequality
$$
\sqrt[n]{c_n^{gr}(L)}>(1-\delta)(a-\varepsilon)^n
$$
holds for all $n=n_q, q=0,1,\ldots\quad$. The second inequality $n_{q+1}-n_q\le n_0+d$
follows from the construction of the sequence $n_0,n_1,\ldots$, and we have thus
 completed the proof.
 \end{proof}

Now we are ready to prove the main result of the paper.

\begin{theorem}
Let $L$ be a finite dimensional simple Lie superalgebra over a field of characteristic
zero. Then its graded PI-exponent
$$
exp^{gr}(L)=\lim_{n\to\infty} \sqrt[n]{c_n^{gr}(L)}
$$
exists an is less than or equal to $d=\dim L$.
\end{theorem}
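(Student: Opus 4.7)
The upper bound $\overline{exp}^{gr}(L)\le d$ is immediate from inequality (\ref{e4}), so my task reduces to showing $\underline{exp}^{gr}(L)\ge a$, where $a=\overline{exp}^{gr}(L)$. My plan is to combine Lemma \ref{l3} with a monotonicity estimate for the total graded codimension sequence, namely the claim that $c_n^{gr}(L)$ is non-decreasing in $n$. To this end I would consider the linear map
$$
\phi_{k,n-k}\colon P_{k,n-k}(L)\to P_{k+1,n-k}(L)\oplus P_{k,n-k+1}(L),\qquad f\mapsto \bigl([f,x_{k+1}],\,[f,y_{n-k+1}]\bigr).
$$
Every simple Lie superalgebra is centerless (as already noted in the proof of Lemma \ref{l3}); hence if both components of $\phi_{k,n-k}(f)$ are graded identities, then every evaluation of $f$ has vanishing bracket with every even and every odd element of $L$, so that value lies in $Z(L)=0$ and $f\in Id^{gr}(L)$. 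Therefore $\phi_{k,n-k}$ is injective, giving
$$
c_{k,n-k}(L)\le c_{k+1,n-k}(L)+c_{k,n-k+1}(L).
$$
Multiplying by $\binom{n}{k}$, summing over $k$, re-indexing the two pieces, and collapsing via Pascal's identity $\binom{n}{j-1}+\binom{n}{j}=\binom{n+1}{j}$, I expect the right-hand side to telescope exactly to $c_{n+1}^{gr}(L)$, yielding the monotonicity $c_n^{gr}(L)\le c_{n+1}^{gr}(L)$.

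With monotonicity in hand, the theorem follows almost immediately. Fix $\varepsilon,\delta>0$. Lemma \ref{l3} supplies an increasing sequence $n_0<n_1<\cdots$ with bounded gaps $n_{q+1}-n_q\le n_0+d$ along which $c_{n_q}^{gr}(L)>((1-\delta)(a-\varepsilon))^{n_q}$. For any $n\ge n_0$ let $n_q$ denote the largest element of the sequence not exceeding $n$; then $n-n_q\le n_0+d$, a constant independent of $n$, and monotonicity gives
$$
c_n^{gr}(L)\ge c_{n_q}^{gr}(L)>\bigl((1-\delta)(a-\varepsilon)\bigr)^{n_q}.
$$
Taking $n$th roots and using $n_q/n\to 1$, we obtain $\liminf_n\sqrt[n]{c_n^{gr}(L)}\ge(1-\delta)(a-\varepsilon)$. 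Since $\varepsilon,\delta$ were arbitrary, $\underline{exp}^{gr}(L)\ge a$, and combined with $\overline{exp}^{gr}(L)=a\le d$ the graded PI-exponent exists and is bounded above by $d$.

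The main technical obstacle is the monotonicity step: one must combine the even extension $[f,x_{k+1}]$ with the odd extension $[f,y_{n-k+1}]$ inside $\phi_{k,n-k}$ in order to genuinely exploit centerlessness in the $\mathbb{Z}_2$-graded setting, and then carry out the binomial re-indexing carefully so as to land exactly on $c_{n+1}^{gr}(L)$. Once this is done, the remainder is a routine conversion of the bounded-gap estimate from Lemma \ref{l3} into a statement about the lower PI-exponent.
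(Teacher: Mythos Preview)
Your argument is correct and follows the same overall architecture as the paper: establish monotonicity $c_n^{gr}(L)\le c_{n+1}^{gr}(L)$, then combine it with the bounded-gap sequence from Lemma~\ref{l3} to force $\underline{exp}^{gr}(L)\ge a$. The only point of divergence is how monotonicity is obtained. The paper notices that the span $M$ of all values of a multilinear $h$ is an $L_0$-submodule of $L$; if $[M,L_1]=0$ then $[M,L]\subseteq M$, so $M$ is a nonzero ideal, hence $M=L$ by simplicity, contradicting $[L,L_1]\ne 0$. Thus the \emph{odd} extension $h\mapsto[h,y_{n-k+1}]$ alone is injective, giving the stronger pointwise inequality $c_{k,n-k+1}(L)\ge c_{k,n-k}(L)$, from which total monotonicity is immediate via $\binom{n+1}{k}\ge\binom{n}{k}$. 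Your route uses only centerlessness, pairs the even and odd extensions, and yields the weaker $c_{k,n-k}(L)\le c_{k+1,n-k}(L)+c_{k,n-k+1}(L)$; the Pascal re-indexing you sketch does telescope exactly to $c_{n+1}^{gr}(L)$ (the boundary terms $j=0$ and $j=n+1$ each pick up the single surviving coefficient $1$). Your version is a touch more elementary---it works for any centerless superalgebra, simple or not---at the cost of the extra combinatorial step; the paper's version leans more heavily on simplicity but delivers a sharper partial-codimension inequality along the way.
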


\begin{proof} First note that, given a multilinear polynomial 
$$h=h(x_1,\ldots,x_k,y_1,\ldots,y_{n-k})\in P_{k,n-k},$$ the linear span $M$ of all its values 
in $L$ is a $L_0$-module since
$$
[h,z]=\sum_i h(x_1,\ldots,[x_i,z],\ldots,x_k,y_1,\ldots,y_{n-k})
$$
$$
+\sum_j h(x_1,\ldots,x_k,y_1,\ldots,[y_j,z],\ldots,y_{n-k})
$$
for any $z\in\mathcal{L}(X,Y)_0$. Hence $ML_1\ne 0$ in $L$ and $0\equiv [h,w]$ is not 
an identity of $L$ for odd variable $w$ as soon as $h\not\in Id^{gr}(L)$. It follows that
$$
c_{k,n-k+1}(L) \ge c_{k,n-k}(L)
$$
and then
\begin{equation}\label{e12}
c_n^{gr}(L) \ge c_m^{gr}(L)
\end{equation}
for $n\ge m$.

Fix  arbitrary small $\varepsilon,\delta >0$. By Lemma \ref{l3} there exists an 
increasing sequence $n_q,q=1,2,\ldots~$, such that $c_n^{gr}(L)>((1-\delta)(a-\varepsilon))^n$
for all $n=n_q,q=0,1,\ldots~$, and  $n_{q+1}-n_q\le n_0+d$. Denote $b=(1-\delta)(a-\varepsilon)$
and take an arbitrary $n_q<n<n_{q+1}$. Then $c_n^{gr}(L) \ge c_{n_q}^{gr}(L)$ and
$n-n_q\le n_0+d$. Referring to (\ref{e6}), we may assume that $b>1$. Then 
$b^{n_q} \ge b^n\cdot b^{-(n_0+d)}$ and
$$
c_n^{gr}(L) \ge (b^{1-\frac{n_0+d}{n}})^n
$$
for all $n_q\le n \le n_{q+1}$ and all $q=0,1,\ldots~$, that is for all sufficiently
large  $n$. The latter inequality means that
$$
\liminf_{n\to\infty} \sqrt[n]{c_n^{gr}(L)} \ge (1-\delta)b=(1-\delta)^2(a-\varepsilon).
$$
Since $\varepsilon,\delta$ were chosen to be  arbitrary, we have thus completed the proof 
of the theorem.\hfill 
\end{proof}

\bigskip

\bigskip

\end{document}